\documentclass [11pt,oneside]{amsart}
\pagestyle{plain}
\usepackage{amsmath}

\setlength\footskip{0.2in} \setlength\topmargin{0.5in}
\setlength\headheight{4in} \setlength\headsep{0.5in}
\setlength\textheight{9.4in} \setlength\textwidth{6.4in}
\setlength\oddsidemargin{-0in} \setlength\evensidemargin{-0in}

\linespread{1.3}
\usepackage{setspace}
\usepackage {amsfonts}
\usepackage {amsmath}
\usepackage {amsthm}
\usepackage {amssymb}
\usepackage {framed}
\usepackage {amsxtra}
\usepackage {enumerate}
\usepackage {graphicx}
\usepackage{color}
\usepackage{graphicx}
\usepackage{graphicx}
\usepackage{wrapfig}
\usepackage{lscape}
\usepackage{rotating}
\usepackage{epstopdf}
\usepackage{pdflscape}
\usepackage{setspace}
\usepackage[left=0.86in, right=.86 in, top=1.2 in, footskip=0.5 in, bottom=0.9 in]{geometry}
\usepackage{adjustbox}

\usepackage{algorithm}
\usepackage{algpseudocode}
\usepackage{tikz}
\usepackage{hyperref}

\makeatletter

\theoremstyle{definition}
\newtheorem{df}{Definition} [section]

\theoremstyle{plain}

\newtheorem{claim}[df]{Claim}

\title{The Chromatic Number of the Plane is at least $5$ - a new proof. }
\author{Geoffrey Exoo$^1$}
\thanks{\mbox{$^1$Department of Mathematics and Computer Science, Indiana State University, Terre Haute, IN 47809, \texttt{ge@cs.indstate.edu}}}

\author{Dan Ismailescu$^2$}
\thanks{$^2$Mathematics Department, Hofstra University, Hempstead, NY 11549, \texttt{dan.p.ismailescu@hofstra.edu}}

\begin{document}


\begin{abstract}
We present an alternate proof of the fact that given any $4$-coloring of the plane there exist two points unit distance apart which are identically colored.
\end{abstract}
\maketitle

\section{\bf Introduction}

In 1950, Edward Nelson raised the problem of determining the minimum number of colors that are needed to color the points of the plane so that no two points unit distance apart are assigned the same color. This number is referred to as the {\it chromatic number of the plane}, and is denoted by $\chi(\mathbb{E}^2)$.

It is easy to show that $\chi(\mathbb{E}^2)\ge 4$ as Leo and William Moser \cite{MM}, and soon after, Golomb (cf. Soifer \cite{soifer}, p. 19) , constructed unit-distance graphs that require four colors - see figure \ref{mosergolomb}. An upper bound of $\chi(\mathbb{E}^2)\le 7$ follows from a hexagonal tiling argument due to Hadwiger \cite{hadwiger}.

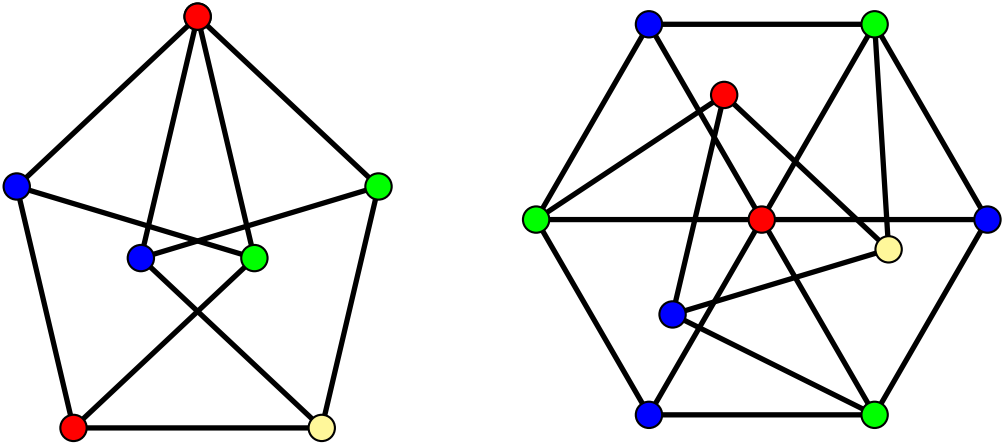
\begin{figure}[ht]
\centering

\begin{tikzpicture}[scale=3.0,line width=2.0pt]

\tikzstyle{every node}=[draw=black,fill=yellow!50!white,thick,
  shape=circle,minimum height=0.35cm,inner sep=2];


\def\phi{{asin(1/(2.0*sqrt(3)))}}
\def\rho{1.1}

\begin{scope}[xshift=-1.25cm]
  \begin{scope}[rotate=180]
    \begin{scope}[rotate=90]
      \draw (\phi:{\rho*sqrt(3)}) -- (-\phi:{\rho*sqrt(3)});  
    \end{scope}
    \foreach \th in {-\phi, \phi}
    {
      \begin{scope}[rotate=\th]
        \draw ( 0, 0) -- ( 60:\rho);
        \draw ( 0, 0) -- (120:\rho);
        \draw ( 0, {\rho*sqrt(3)}) -- ( 60:\rho);
        \draw ( 0, {\rho*sqrt(3)}) -- (120:\rho);
        \draw ( 60:\rho) -- (120: \rho);

        \node[fill=red] at ( 0, 0) {};
        \node[fill=blue]  at ( 60:\rho) {};
        \node[fill=green] at (120:\rho) {};
      \end{scope}
    }
    \begin{scope}[rotate=90]                  
      \node at (\phi:{\rho*sqrt(3)}) {};
      \node[fill=red] at (-\phi:{\rho*sqrt(3)}) {};
    \end{scope}

  \end{scope}
\end{scope}



\def\psi{{2*asin(sqrt(3)/6.0)}}    
\def\rot{{asin(sqrt(3)/6.0)+240}}

\begin{scope}[xshift=1.25cm,yshift=-0.9cm]   

  \foreach \th in {0, 60,...,300}
  {
    \begin{scope}[rotate=\th]
      \draw (0,0) -- (1,0) -- (0.5,{sqrt(3)/2.0});
    \end{scope}
  }
  \foreach \th in {0, 120, 240}
  {
    \begin{scope}[rotate=\th,shift={(-1,0)}]
      \draw (0,0) -- (\psi:1);
      \begin{scope}[shift={(\psi:1)}]
        \draw (0,0) -- (\rot:1);
      \end{scope}
    \end{scope}
  }
  \node[fill=red] at (0,0) {};
  \foreach \th in {0, 120, 240}
  {
    \begin{scope}[rotate=\th]
      \node[fill=blue] at (1,0) {};
      \node[fill=green] at (0.5,{sqrt(3)/2.0}) {};
    \end{scope}
  }
  \begin{scope}[shift={(-1,0)}]
    \node[fill=red] at (\psi:1) {};
  \end{scope}
  \begin{scope}[rotate=120,shift={(-1,0)}]
    \node[fill=blue] at (\psi:1) {};
  \end{scope}
  \begin{scope}[rotate=240,shift={(-1,0)}]
    \node at (\psi:1) {};
  \end{scope}

\end{scope}


\end{tikzpicture}
\caption{Unit-distance graphs requiring $4$ colors: the Moser spindle and the Golomb graph}
\label{mosergolomb}
\end{figure}

For more than half a century, the efforts to improve these bounds were unsuccessful. However, in a very recent breakthrough, Aubrey de Grey \cite{degrey} constructed a unit distance graph which cannot be $4$-colored thus improving the lower bound to $\chi(\mathbb{E}^2)\ge 5$. The purpose of this report is to provide a different proof of this result.

For the remainder of this paper, a \emph{proper $4$-coloring} of the plane, is a coloring using four colors, so that no two points at distance exactly $1$ from each other are identically colored . We plan to prove that no such coloring exists.

Our argument is based on a sequence of three assertions.

{\bf  (a)} \emph{Given any proper $4$-coloring of the plane, $\chi$, there exist two points $P$ and $Q$, at distance $\sqrt{11/3}$ from each other such that $\chi(P)=\chi(Q)$.}

{\bf  (b)} \emph{Let $\chi$ be a proper $4$-coloring of the plane, and let $P$ and $Q$ be two points distance $\sqrt{11/3}$ apart such that $\chi(P)=\chi(Q)$. Then, there exist three points $A, B$, and $C$, which are the vertices of an equilateral triangle with side length $1/\sqrt{3}$ such that $\chi(A)=\chi(B)=\chi(C)$.}

{\bf (c)} \emph{Let $A, B$ and $C$ be the vertices of an equilateral triangle of side $1/\sqrt{3}$. Then, there exists a unit distance graph containing $A$, $B$ and $C$ among its vertices, which cannot be $4$-colored under the restriction that $A$, $B$ and $C$ are identically colored.}

Thus, we will show that any proper $4$-coloring forces a monochromatic pair of points distance $\sqrt{11/3}$ apart, which in turn implies the existence of an monochromatic equilateral triangle of side length $1/\sqrt{3}$. However, under the assumption that a proper $4$-coloring exists, this latter statement contradicts assertion {\bf (c)}. It follows that no proper $4$-coloring of the plane exists, and therefore, $\chi(\mathbb{E}^2)\ge 5$.

The proofs of the above statements rely on the construction of several finite graphs that have the desired properties. Unsurprisingly, most of these graphs can be embedded
in $\mathbb{Q}[\sqrt{3},\sqrt{11}]\times \mathbb{Q}[\sqrt{3},\sqrt{11}] $. Note that both the Moser spindle and the Golomb graph share this property. De Grey used similar coordinates in his proof.
In fact, with one exception, all our graphs have embeddings with vertex coordinates of the form $(a\sqrt{3}/36+b\sqrt{11}/36, c/36+d\sqrt{3}\sqrt{11}/36)$, where $a, b, c, d$ are integers. In order to improve the formatting, throughout the remainder of the paper we will use the following notation:
\begin{equation}\label{notation}
[a,b,c,d]:=\left(\frac{a\sqrt{3}}{36}+\frac{b\sqrt{11}}{36}, \frac{c}{36}+\frac{d\sqrt{3}\sqrt{11}}{36}\right).
\end{equation}


All data files mentioned in this paper are available
at the url: \url{http://cs.indstate.edu/ge/ExooIsmailescuData}.

\section{\bf Proof of assertion (a)}

\begin{claim}\label{Claim1}
There exists a configuration of $79$ points in $\mathbb{Q}[\sqrt{3}, \sqrt{11},\sqrt{247}]\times \mathbb{Q}[\sqrt{3}, \sqrt{11},\sqrt{247}]$ such that for any proper $4$-coloring, there exist two points distance $\sqrt{11/3}$ apart which are identically colored.
\end{claim}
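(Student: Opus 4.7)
The plan is to exhibit an explicit $79$-point set $V \subset \mathbb{Q}[\sqrt{3},\sqrt{11},\sqrt{247}]^2$ together with the unit-distance graph $G=(V,E)$, where $E=\{\{u,v\}: |uv|=1\}$, and the list $F=\{\{u,v\}: |uv|=\sqrt{11/3}\}$ of critical pairs. The claim then reduces to showing that there is no map $\chi:V\to\{1,2,3,4\}$ that is proper for $G$ (no edge monochromatic) and simultaneously ``rainbow'' on $F$ (no $F$-pair monochromatic). Equivalently, the system of edge-disequalities on $E\cup F$ must be infeasible.

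To construct $V$, I would begin with a small $4$-chromatic unit-distance kernel embedded in $\mathbb{Q}[\sqrt{3},\sqrt{11}]^2$, such as the Moser spindle or one of de~Grey's compact gadgets, whose vertices can be written using the notation $[a,b,c,d]$ of equation~(\ref{notation}). I would position this kernel so that two of its vertices are at distance $\sqrt{11/3}$. Then I would superimpose several rotational copies of the kernel about carefully chosen hubs — for instance about the endpoints of the designated $\sqrt{11/3}$-segment. The rotation angles are dictated by the requirement that rotated copies of specific vertices land at unit distance from pre-existing ones; solving these incidence equations typically yields a quadratic whose discriminant is a rational multiple of $247 = 11\cdot 23$, which accounts for the appearance of $\sqrt{247}$ in the coordinate field. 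Identifying vertices shared between overlapping copies, one tunes the number of copies so that $|V| = 79$.

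Once $V$ is pinned down, verification is a finite check. Using exact arithmetic in $\mathbb{Q}[\sqrt{3},\sqrt{11},\sqrt{247}]$, compute the edge set $E$ and the pair set $F$ by direct distance comparisons, then enumerate the proper $4$-colorings of $G$ — or, more efficiently, SAT-encode ``proper on $E$ and rainbow on $F$'' and confirm unsatisfiability. With $4\cdot 79=316$ Boolean variables and a few hundred constraints (after symmetry-breaking on the color permutations), this is well inside the range of modern SAT solvers or a dedicated branch-and-bound procedure. The explicit coordinates and the verification scripts would be deposited at the data URL given in Section~1, so that the reader can reproduce both the construction of $E,F$ and the infeasibility certificate.

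The hard part is not the verification but the \emph{design}: finding a configuration rich enough that adjoining the $F$-disequality constraints to those of the unit-distance graph produces an infeasible instance, while keeping the vertex count as low as $79$ and the coordinates inside the small field $\mathbb{Q}[\sqrt{3},\sqrt{11},\sqrt{247}]$. I would expect $V$ to have been produced by a guided computer search, seeded by the geometry of the Moser spindle and de~Grey's recent $4$-chromatic graphs, and tailored to the specific target distance $\sqrt{11/3}$; once such a candidate is identified, the proof of the claim is reduced to a routine — though nontrivial — finite computation.
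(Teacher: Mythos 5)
Your proposal captures the right flavor---a kernel in $\mathbb{Q}[\sqrt{3},\sqrt{11}]^2$, rotated copies glued at shared hubs, $\sqrt{247}$ arising as the irrational part of a forced rotation angle, and exact-arithmetic computer verification---but it misses the specific two-level structure that makes the paper's proof go through cleanly. The paper does not SAT-solve the full $79$-vertex system from scratch. Instead it first isolates a $40$-vertex graph $G_{40}$ in $\mathbb{Q}[\sqrt{3},\sqrt{11}]^2$ with a purely \emph{positive} forced-coloring property: in any proper $4$-coloring of $G_{40}$ that avoids monochromatic $\sqrt{11/3}$-pairs, the two distinguished vertices $[0,0,0,0]$ and $[0,0,96,0]$ (at distance $8/3$, not $\sqrt{11/3}$) must receive the \emph{same} color. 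Then a single rotation of $G_{40}$ about $[0,0,0,0]$ by $\theta=\arccos(119/128)$ moves $[0,0,96,0]$ to a point unit distance from its preimage; the union $G_{79}$ (one shared vertex, $40+40-1=79$) now has two copies of that forced-equality chain meeting at the origin, so both images of $[0,0,96,0]$ would have to share the origin's color---impossible, since they are joined by a unit edge. Because $\sin\theta = 3\sqrt{247}/128$, the rotated coordinates land in $\mathbb{Q}[\sqrt{3},\sqrt{11},\sqrt{247}]^2$, which is the precise origin of $\sqrt{247}$; your guess that it comes from a discriminant is close but not what happens here. The practical upshot of the paper's approach is that the computer check is on the $40$-vertex gadget rather than the full $79$-vertex instance, and the final contradiction is a one-line geometric argument rather than another round of exhaustive search; your flatter encode-everything-and-SAT approach would also work, but it buries the structure and would require independently discovering $G_{40}$ anyway.
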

\begin{proof}
Consider first the following set of $40$ points in $\mathbb{Q}[\sqrt{3}, \sqrt{11}]\times \mathbb{Q}[\sqrt{3}, \sqrt{11}]$:
{\setstretch{1.0}
\begin{align*}
&{\bf [0, 0, 0, 0], [0, 0, 96, 0]}, [-33, -3, 33, -3], [-33, 3, 33, -9], [-33, 3, 33, 3], [-33, 3, 63, -3],\\
&[-33, 9, 63, 3],[-18, 0, 48, -6], [-18, 0, 48, 6], [-18, 6, 48, 0], [-15, -9, 15, 3], [-15, -3, 15, -3],\\
&[-15, -3, 45, 3], [-15, 3, -15, -3], [-15, 3, 15, -9], [-15, 3, 15, 3], [-15, 3, 81, -3], [-15, 3, 111, 3],\\
&[-15, 9, 15, -3], [-15, 9, 81, 3], [0, -12, 0, 0], [0, -6, 30, 0], [0, -6, 66, 0], [0, 0, 30, -6], [0, 0, 30, 6],\\
&[0, 0, 66, -6], [0, 0, 66, 6], [0, 6, 0, -6], [0, 6, 0, 6], [0, 6, 30, 0], [0, 6, 66, 0], [0, 6, 96, 6], [0, 12, 30, 6],\\
&[0, 12, 66, 6], [15, 3, 15, -3], [15, 3, 81, 3], [18, 0, 48, -6], [18, 6, 48, 0], [33, 3, 33, -3], [33, 3, 63, 3].
\end{align*}
}
The resulting graph, which we will denote $G_{40}$, has $82$ unit edges, and $59$ pairs at distance $\sqrt{11/3}$ from each other (henceforth referred to as $\sqrt{11/3}$ edges).

\begin{figure}[ht]
\centering
\begin{tikzpicture}[line width=1.5pt,scale=1.9]
\tikzstyle{every node}=[draw=black,fill=yellow!50!white,thick,
  shape=circle,minimum height=0.25cm,inner sep=2];
\coordinate (AA) at ( 3.21, 1.76);
\coordinate (AB) at ( 3.21, 6.35);
\coordinate (AC) at ( 0.00, 2.51);
\coordinate (AD) at ( 0.95, 0.86);
\coordinate (AE) at ( 0.95, 4.16);
\coordinate (AF) at ( 0.95, 3.95);
\coordinate (AG) at ( 1.90, 5.60);
\coordinate (AH) at ( 1.72, 2.40);
\coordinate (AI) at ( 1.72, 5.70);
\coordinate (AJ) at ( 2.67, 4.05);
\coordinate (AK) at ( 0.54, 3.30);
\coordinate (AL) at ( 1.49, 1.65);
\coordinate (AM) at ( 1.49, 4.73);
\coordinate (AN) at ( 2.44, 0.21);
\coordinate (AO) at ( 2.44, 0.00);
\coordinate (AP) at ( 2.44, 3.30);
\coordinate (AQ) at ( 2.44, 4.81);
\coordinate (AR) at ( 2.44, 7.89);
\coordinate (AS) at ( 3.40, 1.65);
\coordinate (AT) at ( 3.40, 6.46);
\coordinate (AU) at ( 1.31, 1.76);
\coordinate (AV) at ( 2.26, 3.19);
\coordinate (AW) at ( 2.26, 4.91);
\coordinate (AX) at ( 3.21, 1.54);
\coordinate (AY) at ( 3.21, 4.84);
\coordinate (AZ) at ( 3.21, 3.27);
\coordinate (BA) at ( 3.21, 6.56);
\coordinate (BB) at ( 4.16, 0.11);
\coordinate (BC) at ( 4.16, 3.41);
\coordinate (BD) at ( 4.16, 3.19);
\coordinate (BE) at ( 4.16, 4.91);
\coordinate (BF) at ( 4.16, 8.00);
\coordinate (BG) at ( 5.12, 4.84);
\coordinate (BH) at ( 5.12, 6.56);
\coordinate (BI) at ( 4.93, 1.65);
\coordinate (BJ) at ( 4.93, 6.46);
\coordinate (BK) at ( 4.70, 2.40);
\coordinate (BL) at ( 5.66, 4.05);
\coordinate (BM) at ( 6.42, 2.51);
\coordinate (BN) at ( 6.42, 5.60);
\draw[pink] (AA) -- (AC);
\draw[pink] (AA) -- (AE);
\draw[pink] (AA) -- (AW);
\draw[pink] (AA) -- (BE);
\draw[pink] (AA) -- (BM);
\draw[pink] (AB) -- (AF);
\draw[pink] (AB) -- (AV);
\draw[pink] (AB) -- (BD);
\draw[pink] (AB) -- (BN);
\draw[pink] (AC) -- (AW);
\draw[pink] (AC) -- (AZ);
\draw[pink] (AD) -- (AE);
\draw[pink] (AD) -- (AZ);
\draw[pink] (AD) -- (BB);
\draw[pink] (AE) -- (BA);
\draw[pink] (AE) -- (BC);
\draw[pink] (AE) -- (BE);
\draw[pink] (AF) -- (AX);
\draw[pink] (AF) -- (BD);
\draw[pink] (AG) -- (BD);
\draw[pink] (AG) -- (BF);
\draw[pink] (AG) -- (BG);
\draw[pink] (AH) -- (AI);
\draw[pink] (AH) -- (BI);
\draw[pink] (AI) -- (BJ);
\draw[pink] (AJ) -- (BI);
\draw[pink] (AJ) -- (BJ);
\draw[pink] (AK) -- (AS);
\draw[pink] (AL) -- (AQ);
\draw[pink] (AL) -- (BK);
\draw[pink] (AM) -- (AR);
\draw[pink] (AO) -- (AP);
\draw[pink] (AO) -- (BK);
\draw[pink] (AP) -- (AT);
\draw[pink] (AP) -- (BL);
\draw[pink] (AQ) -- (AS);
\draw[pink] (AQ) -- (BK);
\draw[pink] (AQ) -- (BL);
\draw[pink] (AS) -- (BL);
\draw[pink] (AT) -- (BL);
\draw[pink] (AU) -- (AW);
\draw[pink] (AU) -- (BB);
\draw[pink] (AU) -- (BC);
\draw[pink] (AV) -- (BG);
\draw[pink] (AW) -- (BH);
\draw[pink] (AX) -- (AY);
\draw[pink] (AY) -- (BF);
\draw[pink] (AY) -- (BN);
\draw[pink] (AZ) -- (BA);
\draw[pink] (AZ) -- (BB);
\draw[pink] (AZ) -- (BM);
\draw[pink] (BA) -- (BC);
\draw[pink] (BB) -- (BC);
\draw[pink] (BB) -- (BM);
\draw[pink] (BC) -- (BH);
\draw[pink] (BD) -- (BN);
\draw[pink] (BE) -- (BM);
\draw[pink] (BF) -- (BG);
\draw[pink] (BF) -- (BN);
\draw[blue] (AA) -- (AL);
\draw[blue] (AA) -- (AN);
\draw[blue] (AA) -- (AP);
\draw[blue] (AA) -- (AV);
\draw[blue] (AA) -- (BD);
\draw[blue] (AA) -- (BI);
\draw[blue] (AB) -- (AQ);
\draw[blue] (AB) -- (AR);
\draw[blue] (AB) -- (AW);
\draw[blue] (AB) -- (BE);
\draw[blue] (AB) -- (BJ);
\draw[blue] (AC) -- (AF);
\draw[blue] (AC) -- (AH);
\draw[blue] (AC) -- (AL);
\draw[blue] (AD) -- (AH);
\draw[blue] (AD) -- (AO);
\draw[blue] (AE) -- (AG);
\draw[blue] (AE) -- (AI);
\draw[blue] (AE) -- (AJ);
\draw[blue] (AE) -- (AP);
\draw[blue] (AF) -- (AH);
\draw[blue] (AF) -- (AJ);
\draw[blue] (AF) -- (AQ);
\draw[blue] (AG) -- (AJ);
\draw[blue] (AG) -- (AT);
\draw[blue] (AH) -- (AX);
\draw[blue] (AH) -- (AZ);
\draw[blue] (AI) -- (AY);
\draw[blue] (AI) -- (BA);
\draw[blue] (AJ) -- (BD);
\draw[blue] (AJ) -- (BE);
\draw[blue] (AK) -- (AM);
\draw[blue] (AK) -- (AU);
\draw[blue] (AK) -- (AV);
\draw[blue] (AL) -- (AN);
\draw[blue] (AL) -- (AV);
\draw[blue] (AL) -- (AX);
\draw[blue] (AM) -- (AP);
\draw[blue] (AM) -- (AV);
\draw[blue] (AM) -- (AY);
\draw[blue] (AN) -- (AS);
\draw[blue] (AN) -- (BB);
\draw[blue] (AO) -- (AX);
\draw[blue] (AO) -- (BB);
\draw[blue] (AP) -- (AY);
\draw[blue] (AP) -- (BC);
\draw[blue] (AP) -- (BD);
\draw[blue] (AQ) -- (AZ);
\draw[blue] (AQ) -- (BE);
\draw[blue] (AR) -- (AT);
\draw[blue] (AR) -- (BF);
\draw[blue] (AS) -- (BB);
\draw[blue] (AS) -- (BD);
\draw[blue] (AT) -- (BE);
\draw[blue] (AT) -- (BF);
\draw[blue] (AT) -- (BH);
\draw[blue] (AU) -- (AV);
\draw[blue] (AV) -- (AW);
\draw[blue] (AX) -- (AZ);
\draw[blue] (AX) -- (BB);
\draw[blue] (AX) -- (BI);
\draw[blue] (AX) -- (BK);
\draw[blue] (AY) -- (BA);
\draw[blue] (AY) -- (BC);
\draw[blue] (AZ) -- (BK);
\draw[blue] (BA) -- (BF);
\draw[blue] (BA) -- (BJ);
\draw[blue] (BB) -- (BI);
\draw[blue] (BC) -- (BG);
\draw[blue] (BD) -- (BE);
\draw[blue] (BD) -- (BI);
\draw[blue] (BD) -- (BL);
\draw[blue] (BE) -- (BJ);
\draw[blue] (BE) -- (BL);
\draw[blue] (BF) -- (BH);
\draw[blue] (BF) -- (BJ);
\draw[blue] (BG) -- (BH);
\draw[blue] (BI) -- (BM);
\draw[blue] (BJ) -- (BN);
\draw[blue] (BK) -- (BM);
\draw[blue] (BL) -- (BM);
\draw[blue] (BL) -- (BN);
\node[fill=red] at (AA) {};
\node[fill=red] at (AB) {};
\node at (AC) {};
\node at (AD) {};
\node at (AE) {};
\node at (AF) {};
\node at (AG) {};
\node at (AH) {};
\node at (AI) {};
\node at (AJ) {};
\node at (AK) {};
\node at (AL) {};
\node at (AM) {};
\node at (AN) {};
\node at (AO) {};
\node at (AP) {};
\node at (AQ) {};
\node at (AR) {};
\node at (AS) {};
\node at (AT) {};
\node at (AU) {};
\node at (AV) {};
\node at (AW) {};
\node at (AX) {};
\node at (AY) {};
\node at (AZ) {};
\node at (BA) {};
\node at (BB) {};
\node at (BC) {};
\node at (BD) {};
\node at (BE) {};
\node at (BF) {};
\node at (BG) {};
\node at (BH) {};
\node at (BI) {};
\node at (BJ) {};
\node at (BK) {};
\node at (BL) {};
\node at (BM) {};
\node at (BN) {};
\end{tikzpicture}
\caption{$G_{40}$ has $82$ unit edges (blue) and $59$ $\sqrt{11/3}$ edges (pink). The distance between then first two vertices $[0,0,0,0]$ and $[0,0,96,0]$ (shown in red) is $8/3$.}
\label{G40drawing}
\end{figure}
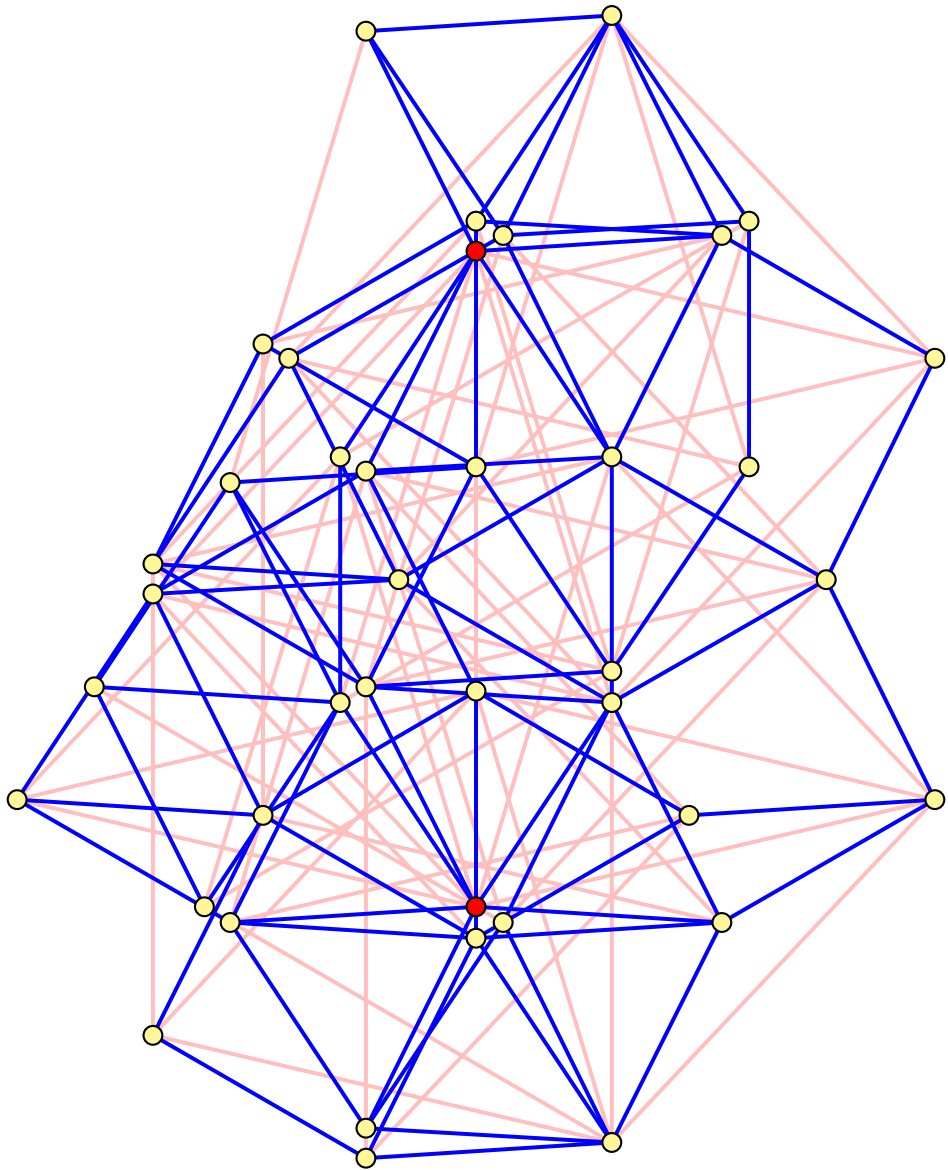

The crucial property of $G_{40}$ is that for any proper $4$-coloring which avoids monochromatic pairs distance $\sqrt{11/3}$ apart,  vertices $[0,0,0,0]$ and $[0,0,96,0]$ are colored the same. This can be easily verified by using Sage.

Next, rotate $G_{40}$ around vertex $[0,0,0,0]$ until the new position of $[0,0,96,0]$ is distance $1$ apart from its original location. Since the distance between vertices $[0,0,0,0]$ and $[0,0,96,0]$ equals $8/3$, it follows that the required rotation angle is $\theta= \arccos(119/128)=21^{\circ}.61\ldots$.

Consider now the graph $G_{79}$ whose vertex set consists of the union of the vertex set of $G_{40}$ and the vertex set of the rotated copy of $G_{40}$.

This graph has $2\cdot 82+1=165$ unit edges and $2\cdot 59= 118$ edges of length $\sqrt{11/3}$, and more importantly, any proper $4$-coloring of the vertices creates a monochromatic pair of points at distance $\sqrt{11/3}$ from each other.

Since $\sin{\theta}=3\sqrt{247}/128$ it follows that all the $79$ points lie in $\mathbb{Q}[\sqrt{3}, \sqrt{11},\sqrt{247}]\times \mathbb{Q}[\sqrt{3}, \sqrt{11},\sqrt{247}]$, as claimed.

\begin{figure}[ht]
\centering
\begin{tikzpicture}[line width=1.2pt,scale=1.9]
\tikzstyle{every node}=[draw=black,fill=yellow!50!white,thick,
  shape=circle,minimum height=0.2cm,inner sep=1];

\input{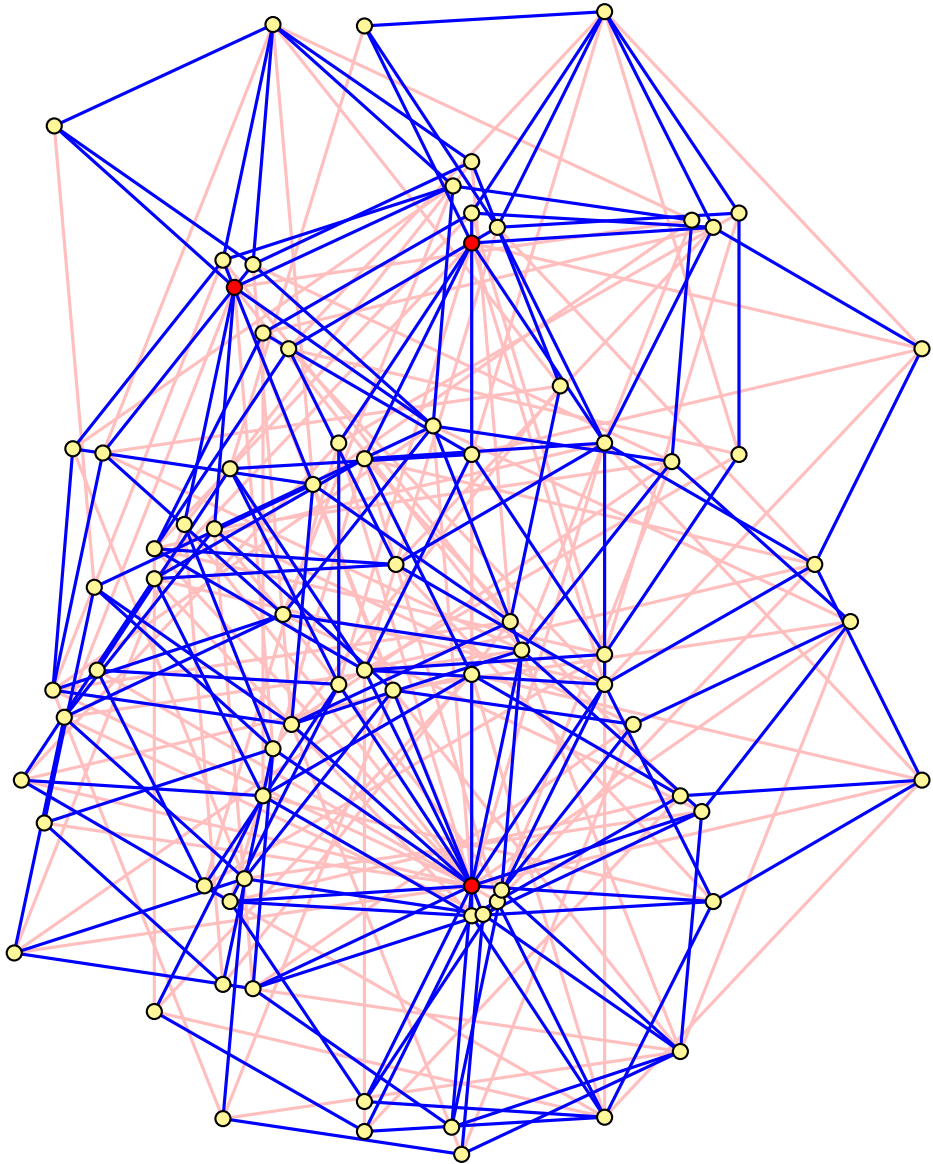}

\end{tikzpicture}

\caption{The graph $G_{79}$ obtained by rotating $G_{40}$ around the origin by the angle $\theta=\arccos(119/128)$. This graph has $165$ unit edges and $118$ $\sqrt{11/3}$ edges, and any proper $4$-coloring forces the endpoints of at least one of these $118$ edges to have the same color.}
\label{G79drawing}
\end{figure}
\end{proof}

\section{\bf Proof of assertion (b)}

\begin{claim}\label{Claim2}
There exists a configuration of $49$ points in $\mathbb{Q}[\sqrt{3}, \sqrt{11}]\times \mathbb{Q}[\sqrt{3}, \sqrt{11}]$ which contains two fixed points $P$ and $Q$ at distance $\sqrt{11/3}$ from each other, and which has following property: for any proper $4$-coloring, either points $P$ and $Q$ have different colors, or there exist three identically colored points $A, B$, and $C$ which are the vertices of an equilateral triangle with side length $1/\sqrt{3}$ .
\end{claim}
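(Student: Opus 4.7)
The plan is to mimic the strategy of Claim~\ref{Claim1}: I will exhibit an explicit set of $49$ points in $\mathbb{Q}[\sqrt{3},\sqrt{11}]\times\mathbb{Q}[\sqrt{3},\sqrt{11}]$ using the $[a,b,c,d]$ notation of (\ref{notation}), and verify the required property by a computer-aided coloring search. Two specific points of the configuration will be designated as $P$ and $Q$; a convenient choice is $P=[0,0,0,0]$ and $Q=[0,0,0,12]$, for which $|PQ|=12\sqrt{33}/36=\sqrt{11/3}$ sits in the ring automatically. The remaining $47$ vertices will be positioned so as to create two kinds of constraints: many pairs at unit distance (forming the edges of a unit-distance graph $G_{49}$ whose endpoints must receive different colors under any proper coloring) and many triples at mutual distance $1/\sqrt{3}$ (which we will \emph{forbid} from being monochromatic).

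Once the vertex set has been fixed, the verification is mechanical. I would compute, using exact arithmetic in $\mathbb{Z}[\sqrt{3},\sqrt{11}]$, the complete list $E_1$ of pairs at distance exactly $1$ and the complete list $T$ of triples that form an equilateral triangle of side $1/\sqrt{3}$. Then I would formulate the following constraint-satisfaction problem: assign a color in $\{1,2,3,4\}$ to each vertex so that every edge in $E_1$ has endpoints of distinct color, so that $\chi(P)=\chi(Q)$, and so that no triple in $T$ is monochromatic. Showing that this instance is infeasible, either via a SAT solver or directly in Sage, is enough: it forces every proper $4$-coloring with $\chi(P)=\chi(Q)$ to contain a monochromatic equilateral triangle of side $1/\sqrt{3}$, which is exactly the assertion of the claim.

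The main obstacle is the \emph{existence} of such a vertex set inside the restrictive coordinate ring $\mathbb{Q}[\sqrt{3},\sqrt{11}]^2$. It is not obvious a priori that $49$ points suffice, and it is even less obvious that a configuration can be realized without enlarging the number field, as had to be done to produce $G_{79}$ in Claim~\ref{Claim1} where an additional $\sqrt{247}$ was required. I would exploit the symmetries of the pair $\{P,Q\}$ --- in particular reflection across the perpendicular bisector of $PQ$ and rotation by $\pi$ about its midpoint --- to cut down the search space, and I would reuse the local lattice gadgets that were effective inside $G_{40}$, since the distance $1/\sqrt{3}=\sqrt{3}/3=12\sqrt{3}/36$ already appears among the short vectors of the $[a,b,c,d]$ lattice. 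Once a successful configuration is located, the propositional-logic verification step is essentially immediate, and the claim follows.
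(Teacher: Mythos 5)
Your plan matches the paper's proof exactly in structure: the paper fixes $P=[0,0,0,0]$ and $Q=[0,0,0,12]$, exhibits an explicit $49$-point set in the $[a,b,c,d]$ lattice (the resulting unit-distance graph $G_{49}$ has $180$ unit edges and $18$ equilateral $1/\sqrt{3}$-triangles), and then checks by computer that of the $18694$ proper $4$-colorings up to color permutation, only $44$ avoid a monochromatic $1/\sqrt{3}$-triangle, and in all $44$ of them $\chi(P)\ne\chi(Q)$. Your constraint-satisfaction formulation is logically equivalent to that enumeration, and your idea of exploiting the $\pi$-rotation and reflection symmetries of $\{P,Q\}$ is also consistent with the actual configuration the authors found.

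That said, as written your proof has a genuine gap that you yourself flag: Claim~\ref{Claim2} is an existence statement, and no configuration is actually produced. Without the explicit $49$ points there is nothing to feed to the SAT solver and the claim remains unproved. You are right that the existence of such a set inside $\mathbb{Q}[\sqrt{3},\sqrt{11}]^2$ is the crux — the paper handles it simply by writing the $49$ points down. A proposal that says ``search with these heuristics'' is a reasonable research plan, but to be a proof one must exhibit the set (or at least cite a verified data file) and report the outcome of the check. Everything else in your write-up is correct: $|PQ|=12\sqrt{3\cdot 11}/36=\sqrt{11/3}$ is in the ring, $1/\sqrt{3}=12\sqrt{3}/36$ is a lattice distance, and the two constraint families (unit edges versus monochromatic $1/\sqrt{3}$-triangles) are the right ones to impose.
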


\begin{proof}
Consider the following set of $49$ points in $\mathbb{Q}[\sqrt{3}, \sqrt{11}]\times \mathbb{Q}[\sqrt{3}, \sqrt{11}]$ (see figure \ref{G49drawing} below):
{\setstretch{1.0}
\begin{align*}
&{\bf [0, 0, 0, 0], [0, 0, 0, 12]}, [-6, 0, 0, 6], [6, 0, 0, 6], [0, 0, -18, 6], [0, 0, 18, 6], [-3, -9, 9, 9], \\
&[3, 9, 9, 9], [-9, -9, 9, 3], [9, 9, 9, 3], [-3, -9, -9, 3], [3, 9, -9, 3], [-12, 0, 0, 0], [12, 0, 0, 0],\\
&[9, -9, -9, 3], [-9, 9, -9, 3], [3, -9, -9, 9], [-3, 9, -9, 9], [3, -9, 9, 3], [-3, 9, 9, 3], [-6, 0, 18, 0],\\
&[6, 0, 18, 0], [-6, 0, -18, 0], [6, 0, -18, 0], [0, -18, 0, 6], [0, 18, 0, 6], [-6, -18, 0, 0], [6, 18, 0, 0],\\
&[6, -18, 0, 0], [-6, 18, 0, 0], [-3, -9, 9, -3], [3, 9, 9, -3], [9, -9, 9, -3], [-9, 9, 9, -3], [-9, -9, -9, -3],\\
&[9, 9, -9, -3], [3, -9, -9, -3], [-3, 9, -9, -3], [-6, 0, 0, -6], [6, 0, 0, -6], [0, 0, 0, -12], [0, 0, -18, -6],\\
&[0, 0, 18, -6], [0, -18, 0, -6], [0, 18, 0, -6], [-3, -9, -9, -9], [-3, 9, 9, -9], [3, -9, 9, -9], [3, 9, -9, -9].
\end{align*}
}
\begin{figure}[ht]
\centering
\begin{tikzpicture}[line width=1.2pt,scale=1.3]
\tikzstyle{every node}=[draw=black,fill=yellow!50!white,thick,
  shape=circle,minimum height=0.2cm,inner sep=1];
\coordinate (AA) at ( 4.00, 3.93);
\coordinate (AB) at ( 4.00, 7.87);
\coordinate (AC) at ( 3.41, 5.90);
\coordinate (AD) at ( 4.59, 5.90);
\coordinate (AE) at ( 4.00, 4.87);
\coordinate (AF) at ( 4.00, 6.93);
\coordinate (AG) at ( 2.00, 7.40);
\coordinate (AH) at ( 6.00, 7.40);
\coordinate (AI) at ( 1.41, 5.43);
\coordinate (AJ) at ( 6.59, 5.43);
\coordinate (AK) at ( 2.00, 4.40);
\coordinate (AL) at ( 6.00, 4.40);
\coordinate (AM) at ( 2.81, 3.93);
\coordinate (AN) at ( 5.19, 3.93);
\coordinate (AO) at ( 3.19, 4.40);
\coordinate (AP) at ( 4.81, 4.40);
\coordinate (AQ) at ( 2.59, 6.37);
\coordinate (AR) at ( 5.41, 6.37);
\coordinate (AS) at ( 2.59, 5.43);
\coordinate (AT) at ( 5.41, 5.43);
\coordinate (AU) at ( 3.41, 4.96);
\coordinate (AV) at ( 4.59, 4.96);
\coordinate (AW) at ( 3.41, 2.91);
\coordinate (AX) at ( 4.59, 2.91);
\coordinate (AY) at ( 0.59, 5.90);
\coordinate (AZ) at ( 7.41, 5.90);
\coordinate (BA) at ( 0.00, 3.93);
\coordinate (BB) at ( 8.00, 3.93);
\coordinate (BC) at ( 1.19, 3.93);
\coordinate (BD) at ( 6.81, 3.93);
\coordinate (BE) at ( 2.00, 3.46);
\coordinate (BF) at ( 6.00, 3.46);
\coordinate (BG) at ( 3.19, 3.46);
\coordinate (BH) at ( 4.81, 3.46);
\coordinate (BI) at ( 1.41, 2.44);
\coordinate (BJ) at ( 6.59, 2.44);
\coordinate (BK) at ( 2.59, 2.44);
\coordinate (BL) at ( 5.41, 2.44);
\coordinate (BM) at ( 3.41, 1.97);
\coordinate (BN) at ( 4.59, 1.97);
\coordinate (BO) at ( 4.00, 0.00);
\coordinate (BP) at ( 4.00, 0.94);
\coordinate (BQ) at ( 4.00, 2.99);
\coordinate (BR) at ( 0.59, 1.97);
\coordinate (BS) at ( 7.41, 1.97);
\coordinate (BT) at ( 2.00, 0.47);
\coordinate (BU) at ( 5.41, 1.50);
\coordinate (BV) at ( 2.59, 1.50);
\coordinate (BW) at ( 6.00, 0.47);
\draw[blue] (AA) -- (AC);
\draw[blue] (AA) -- (AD);
\draw[blue] (AA) -- (AK);
\draw[blue] (AA) -- (AL);
\draw[blue] (AA) -- (AS);
\draw[blue] (AA) -- (AT);
\draw[blue] (AA) -- (BE);
\draw[blue] (AA) -- (BF);
\draw[blue] (AA) -- (BK);
\draw[blue] (AA) -- (BL);
\draw[blue] (AA) -- (BM);
\draw[blue] (AA) -- (BN);
\draw[blue] (AB) -- (AC);
\draw[blue] (AB) -- (AD);
\draw[blue] (AB) -- (AG);
\draw[blue] (AB) -- (AH);
\draw[blue] (AB) -- (AQ);
\draw[blue] (AB) -- (AR);
\draw[blue] (AC) -- (AG);
\draw[blue] (AC) -- (AI);
\draw[blue] (AC) -- (AK);
\draw[blue] (AC) -- (AM);
\draw[blue] (AC) -- (AP);
\draw[blue] (AC) -- (AR);
\draw[blue] (AC) -- (AT);
\draw[blue] (AD) -- (AH);
\draw[blue] (AD) -- (AJ);
\draw[blue] (AD) -- (AL);
\draw[blue] (AD) -- (AN);
\draw[blue] (AD) -- (AO);
\draw[blue] (AD) -- (AQ);
\draw[blue] (AD) -- (AS);
\draw[blue] (AE) -- (AF);
\draw[blue] (AE) -- (AK);
\draw[blue] (AE) -- (AL);
\draw[blue] (AE) -- (AQ);
\draw[blue] (AE) -- (AR);
\draw[blue] (AE) -- (AW);
\draw[blue] (AE) -- (AX);
\draw[blue] (AF) -- (AG);
\draw[blue] (AF) -- (AH);
\draw[blue] (AF) -- (AS);
\draw[blue] (AF) -- (AT);
\draw[blue] (AF) -- (AU);
\draw[blue] (AF) -- (AV);
\draw[blue] (AG) -- (AI);
\draw[blue] (AG) -- (AS);
\draw[blue] (AG) -- (AY);
\draw[blue] (AH) -- (AJ);
\draw[blue] (AH) -- (AT);
\draw[blue] (AH) -- (AZ);
\draw[blue] (AI) -- (AM);
\draw[blue] (AI) -- (AO);
\draw[blue] (AI) -- (AU);
\draw[blue] (AI) -- (BA);
\draw[blue] (AI) -- (BE);
\draw[blue] (AJ) -- (AN);
\draw[blue] (AJ) -- (AP);
\draw[blue] (AJ) -- (AV);
\draw[blue] (AJ) -- (BB);
\draw[blue] (AJ) -- (BF);
\draw[blue] (AK) -- (AQ);
\draw[blue] (AK) -- (AW);
\draw[blue] (AK) -- (AY);
\draw[blue] (AK) -- (BA);
\draw[blue] (AK) -- (BI);
\draw[blue] (AK) -- (BK);
\draw[blue] (AL) -- (AR);
\draw[blue] (AL) -- (AX);
\draw[blue] (AL) -- (AZ);
\draw[blue] (AL) -- (BB);
\draw[blue] (AL) -- (BJ);
\draw[blue] (AL) -- (BL);
\draw[blue] (AM) -- (AP);
\draw[blue] (AM) -- (AV);
\draw[blue] (AM) -- (AX);
\draw[blue] (AM) -- (BH);
\draw[blue] (AM) -- (BI);
\draw[blue] (AM) -- (BM);
\draw[blue] (AN) -- (AO);
\draw[blue] (AN) -- (AU);
\draw[blue] (AN) -- (AW);
\draw[blue] (AN) -- (BG);
\draw[blue] (AN) -- (BJ);
\draw[blue] (AN) -- (BN);
\draw[blue] (AO) -- (AQ);
\draw[blue] (AO) -- (AX);
\draw[blue] (AO) -- (BC);
\draw[blue] (AO) -- (BK);
\draw[blue] (AP) -- (AR);
\draw[blue] (AP) -- (AW);
\draw[blue] (AP) -- (BD);
\draw[blue] (AP) -- (BL);
\draw[blue] (AQ) -- (AY);
\draw[blue] (AR) -- (AZ);
\draw[blue] (AS) -- (AV);
\draw[blue] (AS) -- (AY);
\draw[blue] (AS) -- (BC);
\draw[blue] (AS) -- (BE);
\draw[blue] (AS) -- (BG);
\draw[blue] (AT) -- (AU);
\draw[blue] (AT) -- (AZ);
\draw[blue] (AT) -- (BD);
\draw[blue] (AT) -- (BF);
\draw[blue] (AT) -- (BH);
\draw[blue] (AU) -- (AW);
\draw[blue] (AU) -- (BE);
\draw[blue] (AU) -- (BH);
\draw[blue] (AU) -- (BQ);
\draw[blue] (AV) -- (AX);
\draw[blue] (AV) -- (BF);
\draw[blue] (AV) -- (BG);
\draw[blue] (AV) -- (BQ);
\draw[blue] (AW) -- (BI);
\draw[blue] (AW) -- (BL);
\draw[blue] (AW) -- (BP);
\draw[blue] (AX) -- (BJ);
\draw[blue] (AX) -- (BK);
\draw[blue] (AX) -- (BP);
\draw[blue] (AY) -- (BA);
\draw[blue] (AY) -- (BC);
\draw[blue] (AZ) -- (BB);
\draw[blue] (AZ) -- (BD);
\draw[blue] (BA) -- (BE);
\draw[blue] (BA) -- (BI);
\draw[blue] (BA) -- (BR);
\draw[blue] (BB) -- (BF);
\draw[blue] (BB) -- (BJ);
\draw[blue] (BB) -- (BS);
\draw[blue] (BC) -- (BG);
\draw[blue] (BC) -- (BK);
\draw[blue] (BC) -- (BR);
\draw[blue] (BD) -- (BH);
\draw[blue] (BD) -- (BL);
\draw[blue] (BD) -- (BS);
\draw[blue] (BE) -- (BM);
\draw[blue] (BE) -- (BQ);
\draw[blue] (BE) -- (BR);
\draw[blue] (BE) -- (BV);
\draw[blue] (BF) -- (BN);
\draw[blue] (BF) -- (BQ);
\draw[blue] (BF) -- (BS);
\draw[blue] (BF) -- (BU);
\draw[blue] (BG) -- (BI);
\draw[blue] (BG) -- (BN);
\draw[blue] (BG) -- (BV);
\draw[blue] (BH) -- (BJ);
\draw[blue] (BH) -- (BM);
\draw[blue] (BH) -- (BU);
\draw[blue] (BI) -- (BM);
\draw[blue] (BI) -- (BT);
\draw[blue] (BJ) -- (BN);
\draw[blue] (BJ) -- (BW);
\draw[blue] (BK) -- (BN);
\draw[blue] (BK) -- (BP);
\draw[blue] (BK) -- (BR);
\draw[blue] (BK) -- (BT);
\draw[blue] (BL) -- (BM);
\draw[blue] (BL) -- (BP);
\draw[blue] (BL) -- (BS);
\draw[blue] (BL) -- (BW);
\draw[blue] (BM) -- (BO);
\draw[blue] (BM) -- (BT);
\draw[blue] (BM) -- (BU);
\draw[blue] (BN) -- (BO);
\draw[blue] (BN) -- (BV);
\draw[blue] (BN) -- (BW);
\draw[blue] (BO) -- (BT);
\draw[blue] (BO) -- (BU);
\draw[blue] (BO) -- (BV);
\draw[blue] (BO) -- (BW);
\draw[blue] (BP) -- (BQ);
\draw[blue] (BP) -- (BT);
\draw[blue] (BP) -- (BW);
\draw[blue] (BQ) -- (BU);
\draw[blue] (BQ) -- (BV);
\draw[blue] (BR) -- (BT);
\draw[blue] (BR) -- (BV);
\draw[blue] (BS) -- (BU);
\draw[blue] (BS) -- (BW);
\node[fill=red] at (AA) {};
\node[fill=red] at (AB) {};
\node at (AC) {};
\node at (AD) {};
\node at (AE) {};
\node at (AF) {};
\node at (AG) {};
\node at (AH) {};
\node at (AI) {};
\node at (AJ) {};
\node at (AK) {};
\node at (AL) {};
\node at (AM) {};
\node at (AN) {};
\node at (AO) {};
\node at (AP) {};
\node at (AQ) {};
\node at (AR) {};
\node at (AS) {};
\node at (AT) {};
\node at (AU) {};
\node at (AV) {};
\node at (AW) {};
\node at (AX) {};
\node at (AY) {};
\node at (AZ) {};
\node at (BA) {};
\node at (BB) {};
\node at (BC) {};
\node at (BD) {};
\node at (BE) {};
\node at (BF) {};
\node at (BG) {};
\node at (BH) {};
\node at (BI) {};
\node at (BJ) {};
\node at (BK) {};
\node at (BL) {};
\node at (BM) {};
\node at (BN) {};
\node at (BO) {};
\node at (BP) {};
\node at (BQ) {};
\node at (BR) {};
\node at (BS) {};
\node at (BT) {};
\node at (BU) {};
\node at (BV) {};
\node at (BW) {};
\end{tikzpicture}
\vspace{2mm}
\caption{The graph $G_{49}$ has $180$ unit edges. Any proper $4$-coloring of the vertices of $G_{49}$ either assigns different colors to vertices $P=[0,0,0,0]$ and $Q=[0,0,0,12]$, or it forces a monochromatic equilateral triangle of side length $1/\sqrt{3}$.}
\label{G49drawing}
\end{figure}
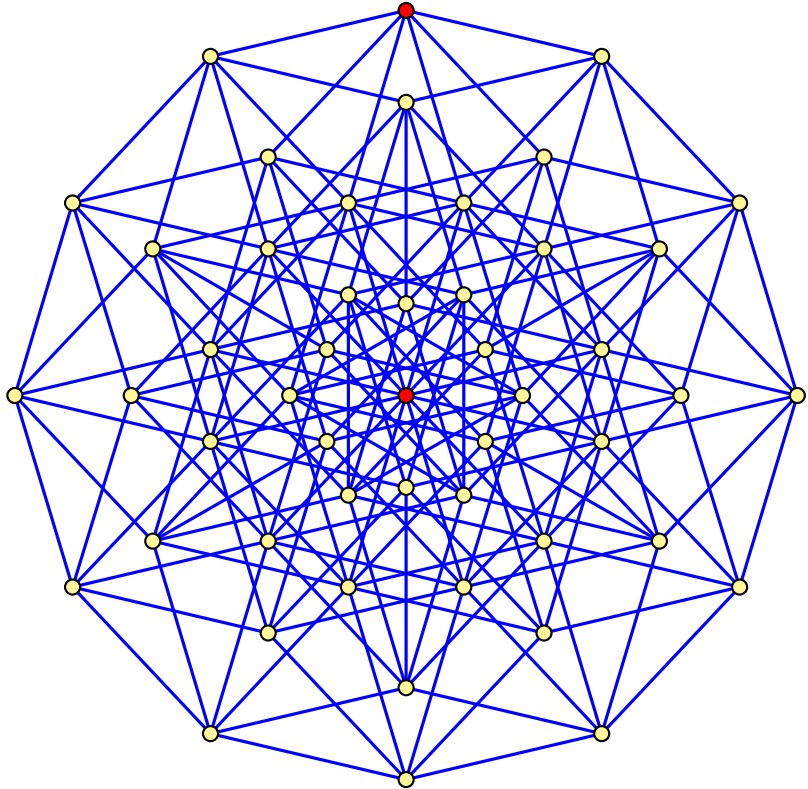

Let $G_{49}$ be the unit distance graph determined by these points. It can be easily checked that $G_{49}$ has $180$ unit edges and the distance between the first two vertices $P=[0,0,0,0]$ and $Q=[0,0,0,12]$ is $\sqrt{11/3}$. The graph $G_{49}$ contains $18$ equilateral triangles of side length $1/\sqrt{3}$ as shown in figure \ref{equilateral}.

\begin{figure}[ht]
\centering

\begin{tikzpicture}[line width=1.2pt,scale=1.5]
\tikzstyle{every node}=[draw=black,fill=yellow!50!white,thick,
  shape=circle,minimum height=0.15cm,inner sep=1];
\input{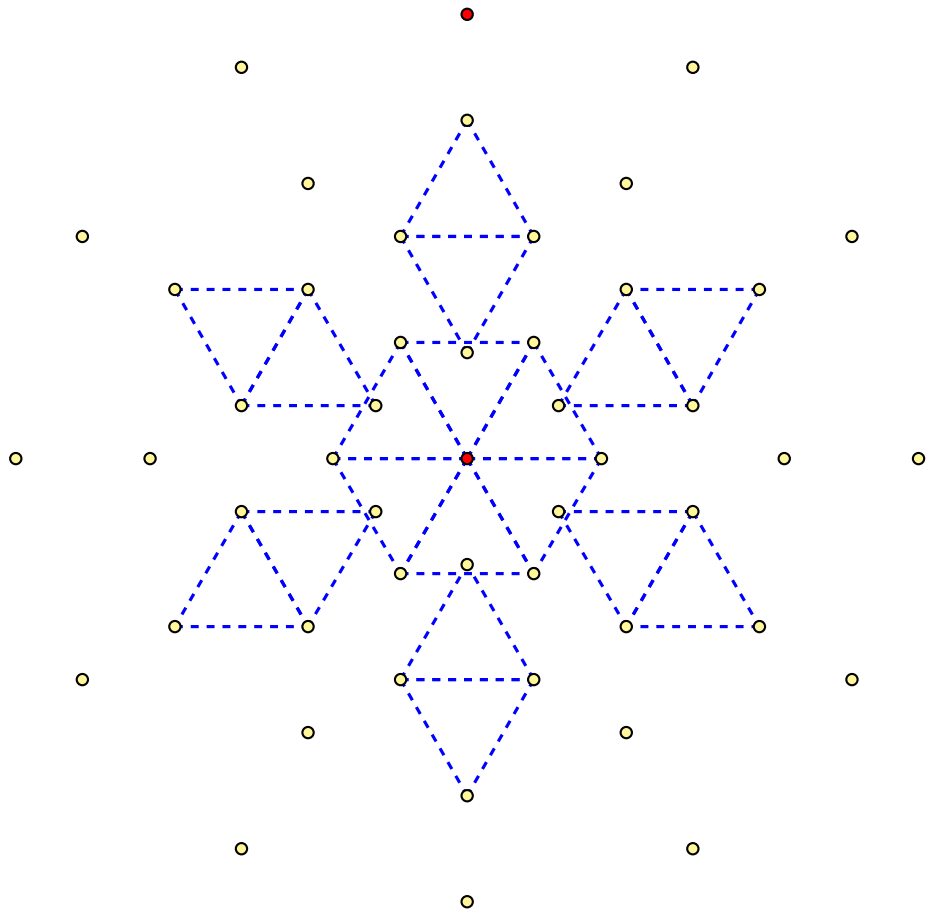}
\end{tikzpicture}

\caption{The $18$ equilateral triangles of side length $1/\sqrt{3}$ contained in $G_{49}$. }
\label{equilateral}
\end{figure}

It can be verified that $G_{49}$ has exactly $18694$ proper $4$-colorings up the color permutations, but only $44$ of these contain no monochromatic equilateral $1/\sqrt{3}$-triangle.
However, in each of these $44$ colorings, the first two vertices $P=[0,0,0,0]$ and $Q=[0,0,0,12]$ are colored differently. This concludes the proof.

\end{proof}

\section{\bf Proof of assertion (c).}

\begin{claim}\label{Claim3}
Let $A$, $B$ and $C$ be the vertices of an equilateral triangle of side length $1/\sqrt{3}$. There exists a unit distance graph of order $627$ containing $A$, $B$ and $C$ among its vertices which cannot be properly $4$-colored if points $A$, $B$ and $C$ are identically colored.
\end{claim}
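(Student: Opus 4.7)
The plan is to exhibit a unit-distance graph $G$ of order $627$, containing $A$, $B$, $C$ as three of its vertices, such that the additional constraint $\chi(A)=\chi(B)=\chi(C)$ renders $G$ not $4$-colorable. The construction should exploit the dihedral symmetry $D_3$ of the equilateral triangle $ABC$; indeed, the order $627=3+6\cdot 104$ decomposes naturally as the three fixed vertices $A$, $B$, $C$ together with $104$ generic orbits of size $6$ under the reflection-rotation action. With this symmetry in hand, the design effort is reduced from $624$ free vertices to roughly $104$ fundamental-domain vertices.

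First I would place the centroid of $ABC$ at the origin and orient the triangle so that $A$, $B$, $C$ acquire coordinates in the paper's $[a,b,c,d]$ notation with small integer entries. Next I would design a seed of roughly $104$ vertices in a fundamental domain of the $D_3$ action, each placed at unit distance from at least one previously constructed vertex; closing the seed under the dihedral group yields the full $627$-vertex graph, and new unit edges appear both from within-orbit and from orbit-crossing pairs. With $A$, $B$, $C$ forced to a common color, every neighbor of any of them is restricted to the remaining three colors, and by chaining such restrictions through spindle-like subgraphs the seed is engineered so that the available palettes eventually collide. The monochromatic-triangle assumption is what breaks the symmetry between the four colors and allows a $3$-coloring-style obstruction to dominate.

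Verification that no $4$-coloring of $G$ respects $\chi(A)=\chi(B)=\chi(C)$ is performed by a SAT solver, in the same spirit as in the proofs of Claims~\ref{Claim1} and \ref{Claim2}. The main obstacle is not analytic but combinatorial: the search space of candidate seed configurations is astronomical, and isolating a seed whose $D_3$-closure is both $4$-colorable as an abstract graph (otherwise the proof architecture of the paper collapses, as a non-$4$-colorable unit-distance graph with no triangle constraint would already suffice) and yet not $4$-colorable under the constraint requires guided computer search. A natural starting point is de~Grey's $4$-chromatic unit-distance graph: one can look for an equilateral $1/\sqrt{3}$-triangle already embedded inside it (or close enough that a small rotated copy can be grafted in), and then iteratively prune, extend, and symmetrize until the constraint becomes exactly tight. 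This trial-and-error reduction process is also what accounts for the unusual vertex count of $627$ rather than a round number, and the explicit vertex and edge lists would most naturally be relegated to the companion data files referenced in the introduction.
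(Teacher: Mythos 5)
Your high-level picture is right — the paper does exploit the dihedral automorphism group of order $6$ generated by a $2\pi/3$ rotation and a reflection, it does verify the non-colorability by computer, and you correctly observe that $G_{627}$ must remain $4$-colorable in the absence of the monochromatic-triangle constraint for the three-step architecture to make sense. But your construction route is genuinely different from the paper's, and one of your supporting calculations is off.

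The paper does not start from de Grey's graph and prune or symmetrize. It starts from an explicitly listed unit-distance graph $G_{51}$, of which $A$, $B$, $C$ are the first three vertices and whose automorphism group already has order $6$. One then enumerates all $13357$ proper $4$-colorings of $G_{51}$ with $\chi(A)=\chi(B)=\chi(C)$, and for each coloring repeatedly adds new vertices whose colors are \emph{forced}: scan for triples of mutually differently-colored vertices whose circumradius equals $1$ and adjoin the circumcenter (which is then forced to the fourth color). Iterating this produces a color conflict for every one of the $13357$ colorings, and the union of the $51$ seed vertices with $576$ forced vertices — collected so as to kill all colorings simultaneously — is $G_{627}$. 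The verification is a simple deterministic DFS; the vertices are ordered so that, once $A$, $B$, $C$ are colored, every later vertex has its color forced, so the check runs in a fraction of a second (a SAT solver would also work, but is overkill here). Your circumcenter idea is conspicuously absent from your sketch, and it is the engine of the whole construction: it is what turns each of the $13357$ colorings into a finite chain of forced moves terminating in a conflict.

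The arithmetic $627 = 3 + 6\cdot 104$ also does not match the structure the paper actually uses. The appendix lists $109$ fundamental-domain representatives, not $104$, because a number of vertices lie on the reflection axes (for instance $[0,0,12,0]$ has $x=0$) and therefore have orbit size $3$ or $1$ rather than $6$; the orbit sizes do not all equal $6$. This is a small point, but it signals that the symmetric design you propose cannot simply pick $104$ generic points in a fundamental domain and close up — the orbit stabilizers matter, and the paper's seed $G_{51}$ already reflects this.
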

We begin with a unit distance graph of order 51, denoted $G_{51}$, whose vertices are given below:
{\setstretch{1.0}
\begin{align*}
& {\bf [0, 0, 12, 0], [-6, 0, -6, 0], [6, 0, -6, 0]}, [0, 0, -24, 0], [0, 0, -6, -6], [0, 0, -6, 6], [-18, 0, -6, 0],\\
& [-12, 0, -24, 0], [-12, 0, 12, 0], [-12, 0, -6, -6], [-12, 0, -6, 6], [-6, 0, 30, 0], [-6, 0, 12, -6],\\
& [-6, 0, 12, 6], [18, 0, -6, 0], [12, 0, -24, 0], [12, 0, 12, 0], [12, 0, -6, -6], [12, 0, -6, 6],  [6, 0, 30, 0],\\
& [6, 0, 12, -6], [6, 0, 12, 6], [-9, -9, 3, -3], [-9, -9, -15, 3], [-9, 9, 3, 3], [-9, 9, -15, -3], \\
& [-3, -9, 3, 3], [-3, -9, 21, -3], [-3, -9, -15, -3], [-3, 9, 3, -3], [-3, 9, 21, 3], [-3, 9, -15, 3], \\
& [9, -9, 3, 3], [9, -9, -15, -3], [9, 9, 3, -3], [9, 9, -15, 3], [3, -9, 3, -3], [3, -9, 21, 3], [3, -9, -15, 3], \\
& [3, 9, 3, 3], [3, 9, 21, -3],  [3, 9, -15, -3], [0, 0, 30, 6], [-15, -9, -15, -3], [15, 9, -15, -3], \\
&[-6, 0, -24, 6], [6, 0, -24, 6], [-15, 9, 3, -3], [-9, 9, 21, -3], [9, -9, 21, -3], [15, -9, 3, -3].
\end{align*}
}

The graph $G_{51}$ has an automorphism group of order 6, generated by a $2 \pi/3$ rotation about the
origin and a reflection about the y-axis. The first three vertices of $G_{51}$ form an equilateral triangle
with side length $1/\sqrt{3}$.  Denote these vertices $A$, $B$, and $C$ - shown colored in red in Figure~\ref{fig:g51}.
Our goal is to show that none of the 4-colorings of $G_{51}$ in which these three vertices have the same color can be extended to the entire plane.

\begin{figure}[ht]
\centering

\begin{tikzpicture}[line width=1.2pt,scale=0.85]
\tikzstyle{every node}=[draw=black,fill=yellow!50!white,thick,
  shape=circle,minimum height=0.18cm,inner sep=1];
\coordinate (AA) at ( 7.00, 7.00);
\coordinate (AB) at ( 5.70, 4.60);
\coordinate (AC) at ( 8.30, 4.60);
\coordinate (AD) at ( 7.00, 2.20);
\coordinate (AE) at ( 7.00, 0.00);
\coordinate (AF) at ( 7.00, 9.20);
\coordinate (AG) at ( 3.09, 4.60);
\coordinate (AH) at ( 4.39, 2.20);
\coordinate (AI) at ( 4.39, 7.00);
\coordinate (AJ) at ( 4.39, 0.00);
\coordinate (AK) at ( 4.39, 9.20);
\coordinate (AL) at ( 5.70, 9.40);
\coordinate (AM) at ( 5.70, 2.40);
\coordinate (AN) at ( 5.70,11.60);
\coordinate (AO) at (10.91, 4.60);
\coordinate (AP) at ( 9.61, 2.20);
\coordinate (AQ) at ( 9.61, 7.00);
\coordinate (AR) at ( 9.61, 0.00);
\coordinate (AS) at ( 9.61, 9.20);
\coordinate (AT) at ( 8.30, 9.40);
\coordinate (AU) at ( 8.30, 2.40);
\coordinate (AV) at ( 8.30,11.60);
\coordinate (AW) at ( 1.30, 3.50);
\coordinate (AX) at ( 1.30, 5.70);
\coordinate (AY) at ( 8.79, 8.10);
\coordinate (AZ) at ( 8.79, 1.10);
\coordinate (BA) at ( 2.61, 8.10);
\coordinate (BB) at ( 2.61, 5.90);
\coordinate (BC) at ( 2.61, 1.10);
\coordinate (BD) at (10.09, 3.50);
\coordinate (BE) at (10.09,10.50);
\coordinate (BF) at (10.09, 5.70);
\coordinate (BG) at ( 5.21, 8.10);
\coordinate (BH) at ( 5.21, 1.10);
\coordinate (BI) at (12.70, 3.50);
\coordinate (BJ) at (12.70, 5.70);
\coordinate (BK) at ( 3.91, 3.50);
\coordinate (BL) at ( 3.91,10.50);
\coordinate (BM) at ( 3.91, 5.70);
\coordinate (BN) at (11.39, 8.10);
\coordinate (BO) at (11.39, 5.90);
\coordinate (BP) at (11.39, 1.10);
\coordinate (BQ) at ( 7.00,14.00);
\coordinate (BR) at ( 0.00, 1.10);
\coordinate (BS) at (14.00, 1.10);
\coordinate (BT) at ( 5.70, 6.80);
\coordinate (BU) at ( 8.30, 6.80);
\coordinate (BV) at ( 7.49, 3.50);
\coordinate (BW) at ( 8.79, 5.90);
\coordinate (BX) at ( 5.21, 5.90);
\coordinate (BY) at ( 6.51, 3.50);
\draw[blue] (AA) -- (AD);
\draw[blue] (AA) -- (AG);
\draw[blue] (AA) -- (AM);
\draw[blue] (AA) -- (AN);
\draw[blue] (AA) -- (AO);
\draw[blue] (AA) -- (AU);
\draw[blue] (AA) -- (AV);
\draw[blue] (AA) -- (BA);
\draw[blue] (AA) -- (BB);
\draw[blue] (AA) -- (BD);
\draw[blue] (AA) -- (BE);
\draw[blue] (AA) -- (BK);
\draw[blue] (AA) -- (BL);
\draw[blue] (AA) -- (BN);
\draw[blue] (AA) -- (BO);
\draw[blue] (AB) -- (AE);
\draw[blue] (AB) -- (AF);
\draw[blue] (AB) -- (AJ);
\draw[blue] (AB) -- (AK);
\draw[blue] (AB) -- (AL);
\draw[blue] (AB) -- (AP);
\draw[blue] (AB) -- (AQ);
\draw[blue] (AB) -- (AW);
\draw[blue] (AB) -- (AX);
\draw[blue] (AB) -- (AY);
\draw[blue] (AB) -- (AZ);
\draw[blue] (AB) -- (BA);
\draw[blue] (AB) -- (BC);
\draw[blue] (AB) -- (BD);
\draw[blue] (AB) -- (BF);
\draw[blue] (AC) -- (AE);
\draw[blue] (AC) -- (AF);
\draw[blue] (AC) -- (AH);
\draw[blue] (AC) -- (AI);
\draw[blue] (AC) -- (AR);
\draw[blue] (AC) -- (AS);
\draw[blue] (AC) -- (AT);
\draw[blue] (AC) -- (BG);
\draw[blue] (AC) -- (BH);
\draw[blue] (AC) -- (BI);
\draw[blue] (AC) -- (BJ);
\draw[blue] (AC) -- (BK);
\draw[blue] (AC) -- (BM);
\draw[blue] (AC) -- (BN);
\draw[blue] (AC) -- (BP);
\draw[blue] (AD) -- (AG);
\draw[blue] (AD) -- (AO);
\draw[blue] (AD) -- (BC);
\draw[blue] (AD) -- (BF);
\draw[blue] (AD) -- (BM);
\draw[blue] (AD) -- (BP);
\draw[blue] (AD) -- (BT);
\draw[blue] (AD) -- (BU);
\draw[blue] (AE) -- (BC);
\draw[blue] (AE) -- (BD);
\draw[blue] (AE) -- (BK);
\draw[blue] (AE) -- (BP);
\draw[blue] (AF) -- (BA);
\draw[blue] (AF) -- (BF);
\draw[blue] (AF) -- (BM);
\draw[blue] (AF) -- (BN);
\draw[blue] (AF) -- (BQ);
\draw[blue] (AG) -- (AJ);
\draw[blue] (AG) -- (AK);
\draw[blue] (AG) -- (BR);
\draw[blue] (AG) -- (BV);
\draw[blue] (AH) -- (AI);
\draw[blue] (AH) -- (AX);
\draw[blue] (AH) -- (AZ);
\draw[blue] (AH) -- (BR);
\draw[blue] (AH) -- (BT);
\draw[blue] (AI) -- (AM);
\draw[blue] (AI) -- (AN);
\draw[blue] (AI) -- (AT);
\draw[blue] (AI) -- (AW);
\draw[blue] (AI) -- (AY);
\draw[blue] (AI) -- (BV);
\draw[blue] (AI) -- (BW);
\draw[blue] (AJ) -- (AU);
\draw[blue] (AJ) -- (AW);
\draw[blue] (AJ) -- (AZ);
\draw[blue] (AJ) -- (BR);
\draw[blue] (AJ) -- (BV);
\draw[blue] (AK) -- (AV);
\draw[blue] (AK) -- (AX);
\draw[blue] (AK) -- (AY);
\draw[blue] (AK) -- (BU);
\draw[blue] (AL) -- (AQ);
\draw[blue] (AL) -- (BB);
\draw[blue] (AL) -- (BE);
\draw[blue] (AL) -- (BQ);
\draw[blue] (AL) -- (BW);
\draw[blue] (AM) -- (AR);
\draw[blue] (AM) -- (AW);
\draw[blue] (AM) -- (BB);
\draw[blue] (AM) -- (BD);
\draw[blue] (AM) -- (BW);
\draw[blue] (AN) -- (AS);
\draw[blue] (AN) -- (AY);
\draw[blue] (AN) -- (BA);
\draw[blue] (AN) -- (BE);
\draw[blue] (AN) -- (BT);
\draw[blue] (AO) -- (AR);
\draw[blue] (AO) -- (AS);
\draw[blue] (AO) -- (BS);
\draw[blue] (AO) -- (BY);
\draw[blue] (AP) -- (AQ);
\draw[blue] (AP) -- (BH);
\draw[blue] (AP) -- (BJ);
\draw[blue] (AP) -- (BS);
\draw[blue] (AP) -- (BU);
\draw[blue] (AQ) -- (AU);
\draw[blue] (AQ) -- (AV);
\draw[blue] (AQ) -- (BG);
\draw[blue] (AQ) -- (BI);
\draw[blue] (AQ) -- (BX);
\draw[blue] (AQ) -- (BY);
\draw[blue] (AR) -- (BH);
\draw[blue] (AR) -- (BI);
\draw[blue] (AR) -- (BS);
\draw[blue] (AR) -- (BY);
\draw[blue] (AS) -- (BG);
\draw[blue] (AS) -- (BJ);
\draw[blue] (AS) -- (BT);
\draw[blue] (AT) -- (BL);
\draw[blue] (AT) -- (BO);
\draw[blue] (AT) -- (BQ);
\draw[blue] (AT) -- (BX);
\draw[blue] (AU) -- (BI);
\draw[blue] (AU) -- (BK);
\draw[blue] (AU) -- (BO);
\draw[blue] (AU) -- (BX);
\draw[blue] (AV) -- (BG);
\draw[blue] (AV) -- (BL);
\draw[blue] (AV) -- (BN);
\draw[blue] (AV) -- (BU);
\draw[blue] (AW) -- (BA);
\draw[blue] (AW) -- (BH);
\draw[blue] (AW) -- (BX);
\draw[blue] (AX) -- (BC);
\draw[blue] (AX) -- (BG);
\draw[blue] (AX) -- (BR);
\draw[blue] (AX) -- (BT);
\draw[blue] (AY) -- (BD);
\draw[blue] (AY) -- (BJ);
\draw[blue] (AY) -- (BV);
\draw[blue] (AZ) -- (BF);
\draw[blue] (AZ) -- (BI);
\draw[blue] (AZ) -- (BW);
\draw[blue] (BA) -- (BK);
\draw[blue] (BB) -- (BC);
\draw[blue] (BB) -- (BL);
\draw[blue] (BB) -- (BY);
\draw[blue] (BC) -- (BM);
\draw[blue] (BC) -- (BY);
\draw[blue] (BD) -- (BN);
\draw[blue] (BD) -- (BS);
\draw[blue] (BE) -- (BF);
\draw[blue] (BE) -- (BO);
\draw[blue] (BE) -- (BQ);
\draw[blue] (BE) -- (BW);
\draw[blue] (BF) -- (BP);
\draw[blue] (BF) -- (BT);
\draw[blue] (BG) -- (BK);
\draw[blue] (BG) -- (BY);
\draw[blue] (BH) -- (BM);
\draw[blue] (BH) -- (BX);
\draw[blue] (BI) -- (BN);
\draw[blue] (BI) -- (BW);
\draw[blue] (BJ) -- (BP);
\draw[blue] (BJ) -- (BS);
\draw[blue] (BJ) -- (BU);
\draw[blue] (BK) -- (BR);
\draw[blue] (BL) -- (BM);
\draw[blue] (BL) -- (BQ);
\draw[blue] (BL) -- (BX);
\draw[blue] (BM) -- (BU);
\draw[blue] (BO) -- (BP);
\draw[blue] (BO) -- (BV);
\draw[blue] (BP) -- (BV);
\node[fill=red] at (AA) {};
\node[fill=red] at (AB) {};
\node[fill=red] at (AC) {};
\node at (AD) {};
\node at (AE) {};
\node at (AF) {};
\node at (AG) {};
\node at (AH) {};
\node at (AI) {};
\node at (AJ) {};
\node at (AK) {};
\node at (AL) {};
\node at (AM) {};
\node at (AN) {};
\node at (AO) {};
\node at (AP) {};
\node at (AQ) {};
\node at (AR) {};
\node at (AS) {};
\node at (AT) {};
\node at (AU) {};
\node at (AV) {};
\node at (AW) {};
\node at (AX) {};
\node at (AY) {};
\node at (AZ) {};
\node at (BA) {};
\node at (BB) {};
\node at (BC) {};
\node at (BD) {};
\node at (BE) {};
\node at (BF) {};
\node at (BG) {};
\node at (BH) {};
\node at (BI) {};
\node at (BJ) {};
\node at (BK) {};
\node at (BL) {};
\node at (BM) {};
\node at (BN) {};
\node at (BO) {};
\node at (BP) {};
\node at (BQ) {};
\node at (BR) {};
\node at (BS) {};
\node at (BT) {};
\node at (BU) {};
\node at (BV) {};
\node at (BW) {};
\node at (BX) {};
\node at (BY) {};
\end{tikzpicture}
\caption{$G_{51}$ with vertices $A$, $B$ and $C$ in red.}
\label{fig:g51}
\end{figure}
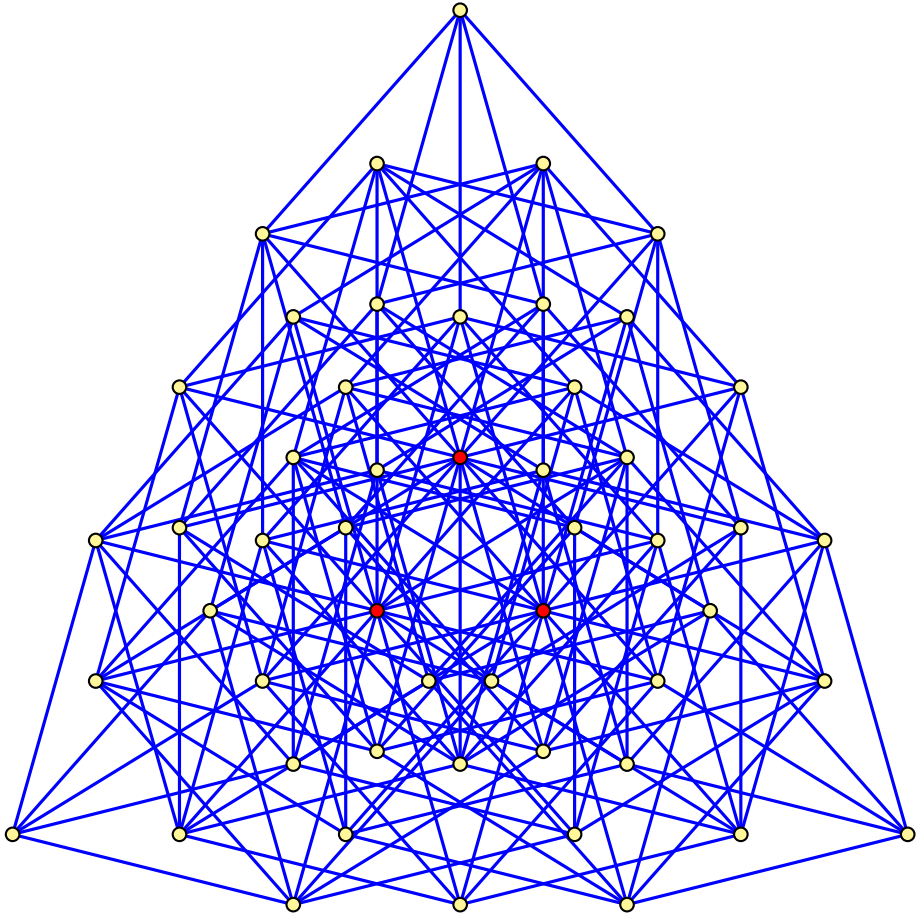

There are $13357$ proper $4$-colorings of $G_{51}$ for which $\chi(A)=\chi(B)=\chi(C)$, a fact that Sage, Maple or Mathematica can quickly verify.

Our method considers each of these colorings separately. For each coloring of  $G_{51}$, repeatedly add vertices whose colors are
forced, until arriving at a conflict. The added vertices are chosen by examining all triples of vertices in which each vertex of the triple has a different color,
computing the circumradius determined by the three vertices, and adding the circumcenter to the graph whenever the circumradius equals $1$.
For each coloring, we are eventually led to a color conflict, that is, we obtain two points distance $1$ apart that are identically colored.

For most of the 13357 colorings, this procedure produces such a conflict very quickly.
More than half of the cases were settled after at most 3 new vertices were added.

The most challenging coloring is shown below. The numbering is consistent with the ordering of the vertices of $G_{51}$ given earlier.
\begin{align*}
&Color_1=\{1, 2, 3, 43, 44, 45, 46, 49, 51\},\\
&Color_2=\{4, 5, 6, 8, 11, 14, 18, 20, 21, 23, 28, 33, 36, 48\},\\
&Color_3=\{7, 13, 15, 17, 25, 26, 29, 31, 34, 37, 38, 40, 42, 47\},\\
&Color_4=\{9, 10, 12, 16, 19, 22, 24, 27, 30, 32, 35, 39, 41, 50\}.
\end{align*}

This particular coloring of $G_{51}$ required 55 additional vertices until a conflict was reached. We give these vertices below, in the order they were added
to the graph. They are also available at \cite{website}.

{\setstretch{1.0}
\begin{align*}
&[6, 0, -24, -6], [-15, -9, 3, 3], [9, 9, 21, 3], [12, 0, 30, -6], [-21, -9, -15, 3], [-15, -9, 21, -3], [15, 9, 21, -3], \\
&[9, 9, 39, -3], [3, 9, 39, 3], [-6, -6, 0, 0], [3, 3, 9, 3], [-9, 9, 39, 3], [-15, -9, -33, 3], [24, 0, -6, -6],\\
&[-21, -9, 3, -3], [-18, 0, 30, 0], [18, 0, 30, 0], [-15, -9, 39, 3], [9, -3, 15, -3], [-3, -3, 15, -3], [9, 3, 9, -3],\\
&[-18, -18, 12, 0], [-21, -3, 15, 3], [6, -6, 0, 0], [15, -9, 39, -3], [-18, -18, -24, 0], [-3, 3, 9, -3], \\
&[-18, -12, 6, 0], [12, 6, 6, 0], [24, 6, 24, -6], [-9, 3, 9, 3], [9, -3, -21, -3], [-3, 3, 45, -3], [12, -6, 18, 0],\\
&[-6, 6, 24, 0], [-12, -6, 0, 6], [12, 6, -30, 0], [12, 6, 42, 0], [3, 3, 45, 3], [0, -12, 24, 0], [27, -3, 33, -3],\\
&[4, 0, -6, -2], [0, -6, 54, 0], [-18, -12, 42, 0], [30, 6, 24, 0], [6, -6, 18, 6], [-10, -6, 18, 2], [-2, 0, -6, 4],\\
&[6, -6, 54, 6],[30, 6, -12, 0], [-10, -6, 54, 2], [8, -6, 0, 2], [22, 0, 12, -2], [8, -6, 36, 2], [16, 0, 12, 4].
\end{align*}
}

While some colorings require more additional points than others until a conflict is generated,
one can find a set of 576 additional vertices that will eliminate all of the 13357 colorings of $G_{51}$.

This gives a graph of order 51+576=627, denoted $G_{627}$, that cannot be $4$-colored when $A$, $B$ and $C$
are colored the same. This graph has $2982$ unit edges, and as $G_{51}$, has an automorphism group of order 6
generated by a $2 \pi/3$ rotation about the origin and a reflection about the y-axis. The vertex coordinates of $G_{627}$ are available at the url \cite{website}.
A drawing of this graph is shown in the figure below.

\vspace{4mm}

\begin{figure}[ht]
\centering

\begin{tikzpicture}[line width=0.5pt,scale=1.2]
\tikzstyle{every node}=[draw=black,fill=yellow!50!white,thick,
  shape=circle,minimum height=0.10cm,inner sep=1];
\input{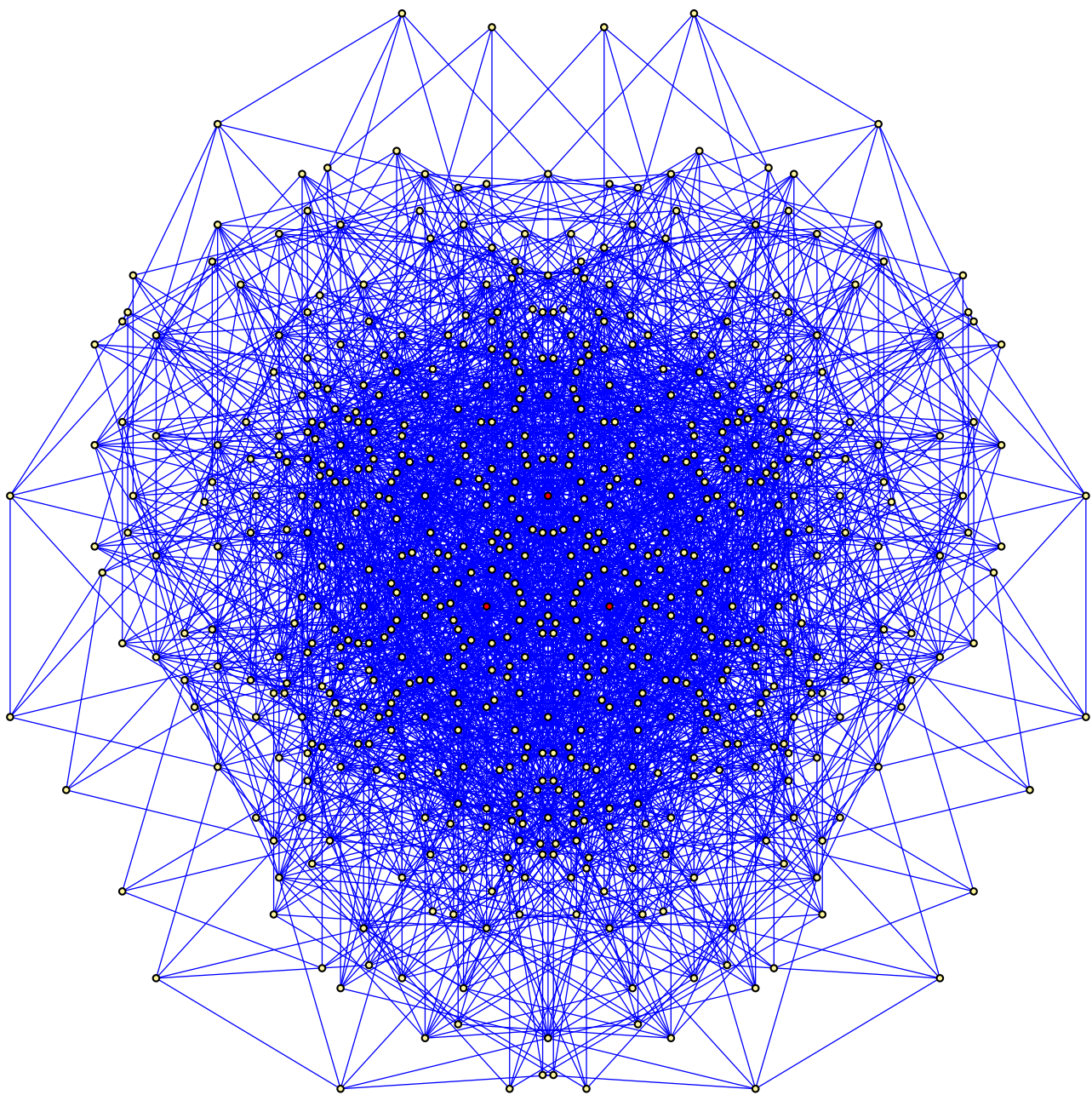}
\end{tikzpicture}
\caption{The graph $G_{627}$ is not 4-colorable if the first three vertices $A$, $B$ and $C$ are identically colored}
\label{fig:g627}
\end{figure}

\vspace{4mm}

For completeness, in the appendix we include a list of 109 vertices that can be used to generate the graph $G_{627}$ by rotating
these points around the origin by multiples of $2\pi/3$ and/or reflecting across the lines $y=0$ and $\sqrt{3}x\pm y =0$.
The full set of coordinates of the vertices of the graph $G_{627}$ is available at the url \cite{website}.

To verify that $G_{627}$ is not $4$-colorable under the restriction $\chi(A)=\chi(B)=\chi(C)$  we can use a simple
depth-first-search
coloring procedure.
The algorithm maintains a list of colors, $available$
that may be used at each vertex.  Initially all colors are available at each
vertex.  When a vertex is colored, the available colors for its neighbors may be
updated.
Note that it is important that the vertices be processed in the order given in our data file.
In that case all colors for vertices 52 to 627 are forced.
The verification takes a small fraction of a second when implemented in ${\bf C}$.  Even a Python
implementation takes only a few seconds.

\vspace{5mm}

\begin{algorithm}[htb!]
\caption{Four Coloring Search}\label{algo:fourcolor}
\begin{algorithmic}[1]
\Procedure{ColorVertex}{Vertex v, Color c}
   \State $color[v] \gets c$
   \ForAll {$u \textit{ adjacent to } v$}
      \State Remove $c$ from $available[u]$.
      \If {$available[u]$ is empty}
         \State Return False
      \ElsIf {$available[u]$ contains only one color $c'$}
         \If {Not ColorVertex($u$,$c'$)}
            \State Return False
         \EndIf
      \EndIf
   \EndFor
   \State Return True
\EndProcedure

\Statex

\Procedure{DFS}{Vertex $v$}
   \If {$v > $n}
      \State SaveColoring()
      \State Exit()
   \EndIf
   \For {$c \in available[v]$}
       \State Save state.
       \If {ColorVertex(v, c)}
          \State DFS(v+1)
       \EndIf
       \State Restore state.
   \EndFor
\EndProcedure

\Statex

\Procedure{FourColor}{Graph $G$}
   \State Initialize $available$
   \For {$v \gets 1,2,3$}
      \State ColorVertex(v, 1)
   \EndFor
   \State DFS(4)
   \State Output("No coloring.")
\EndProcedure
\end{algorithmic}
\end{algorithm}

\section{\bf Comments}

Using the previous three claims, one can construct a $5$-chromatic unit distance graph as follows.
Start with a copy of $G_{79}$. For each of the 118 edges of length $\sqrt{11/3}$ place a copy of $G_{49}$ so that the
marked vertices of $G_{49}$ coincide with the endpoints of that edge. We obtain a graph of order $118\cdot 49=5782$ where some
vertices are counted multiple times. Then, Claim \ref{Claim1} and Claim \ref{Claim2} imply that any proper $4$-coloring of this graph
forces at least one of the $118\cdot 18=2124$ equilateral triangles of side length $1/\sqrt{3}$ contained in the copies of $G_{49}$ to be
monochromatic. Finally, for each such triangle construct a copy of $G_{627}$ so that vertices $A$, $B$ and $C$ of $G_{627}$ coincide
with the vertices of the triangle.  The resulting unit distance graph cannot be properly colored with 4 colors.
The order of this graph is much larger than 20425, the order of the first de Grey's graph, hence determining the exact answer is not all that interesting.

The reason for this discrepancy is that de Grey's approach is more economical as it requires only two steps. De Grey first constructs
a graph of order 121 which contains 52 equilateral triangles of side length $\sqrt{3}$, and which has the property that for any proper $4$-coloring
at least one of these 52 triangles is monochromatic. Second, de Grey shows that such a monochromatic triple can be blocked by a graph of order 1345.

De Grey's 20425-vertex graph can be embedded in $\mathbb{Q}[\sqrt{3},\sqrt{11},\sqrt{5},\sqrt{7}]\times \mathbb{Q}[\sqrt{3},\sqrt{11},\sqrt{5},\sqrt{7}]$.
In comparison, our method is less efficient as it employs three steps; our $5$-chromatic graph can be embedded in
$\mathbb{Q}[\sqrt{3}, \sqrt{11},\sqrt{247}]\times \mathbb{Q}[\sqrt{3}, \sqrt{11},\sqrt{247}]$.

Very recently, Marijn Heule \cite{mixon} found a 826-vertex $5$-chromatic unit distance graph which can be embedded in $\mathbb{Q}[\sqrt{3},\sqrt{11},\sqrt{5}]\times \mathbb{Q}[\sqrt{3},\sqrt{11},\sqrt{5}]$.

One interesting question is whether there exists a $5$-chromatic unit distance graph which can be embedded in
$\mathbb{Q}[\sqrt{3},\sqrt{11}]\times \mathbb{Q}[\sqrt{3},\sqrt{11}]$. If true, this would be the best possible as a recent result of Madore shows \cite{madore}.
More generally, what is the smallest subfield of $\mathbb{E}^2$ that requires 5 colors?

There are many more interesting questions that are wide open. What is the minimum order of a $5$-chromatic unit distance graph in the plane? Is it possible to
use the newly discovered ideas to construct an unit distance graph which requires six colors?

Can one prove that the \emph{fractional chromatic number} of the plane is greater than 4? Cranston and Rabern \cite{CR} constructed a sequence of unit-distance graphs
whose fractional chromatic number approaches $105/29=3.6206\ldots$. This was improved in \cite{EI} by the authors of the present report  who found a unit-distance graph of order
73 whose fractional chromatic number is $383/102=3.7549\ldots$.

It would also be interesting to investigate if any of these techniques can be used to improve the lower bounds for the chromatic number of higher dimensional Euclidean spaces.

\section{\bf Appendix}
We include a list of 109 vertices that can be used to generate the graph $G_{627}$ in Claim \ref{Claim3}.
The full set of 627 vertices is obtained by rotating these points around the origin by multiples of $2\pi/3$
and/or reflecting across the lines $y=0$ and $\sqrt{3}x\pm y =0$.

{\setstretch{1.0}
\begin{align*}
&[0, 0, 12, 0], [0, 0, -24, 0], [0, 0, -6, -6], [0, 0, -6, 6], [-18, 0, -6, 0], [-12, 0, -6, -6], [-12, 0, -6, 6],\\
&[-6, 0, 12, -6], [-6, 0, 12, 6], [0, 0, 30, 6], [-6, 0, -24, 6], [-32, -6, -12, 2], [-32, 6, -12, -2], \\
&[-30, -6, -18, -6], [-30, -6, -12, 0], [-30, -6, 6, 6], [-30, -6, 24, 0], [-30, 0, 12, 6], [-30, 6, -18, 6], \\
&[-30, 6, -12, 0], [-30, 6, 6, -6], [-30, 6, 24, 0], [-27, -9, -15, 9], [-27, -9, 3, 3], [-27, -9, 21, -3], \\
&[-27, -3, -27, 3], [-27, -3, -3, 3], [-27, 3, -3, -3], [-27, 9, -15, 3], [-27, 9, 3, -3], [-27, 9, 21, 3], \\
&[-24, -6, -18, 0], [-24, -6, 24, -6], [-24, 0, -24, 0], [-24, 0, -6, -6], [-24, 0, -6, 6], [-24, 0, 12, 0], \\
&[-24, 6, -18, 0], [-24, 18, -6, 0], [-22, -6, -12, 4], [-22, 0, -6, -4], [-22, 0, -6, 4], [-22, 0, 12, -2],\\
&[-22, 0, 12, 2], [-21, -9, -15, -9], [-21, -9, -15, 3], [-21, -9, 3, -3], [-21, -9, 21, 3], [-21, -3, -3, -3],\\
&[-21, -3, 15, 3], [-21, 3, -9, -3], [-21, 3, -3, 3], [-21, 3, 15, -3], [-21, 9, -15, -3], [-21, 9, -15, 9],\\
&[-21, 9, 3, -9], [-21, 9, 3, 3], [-21, 9, 21, -3], [-19, -3, -3, -1], [-19, 3, -3, 1], [-18, -18, 12, 0], \\
&[-18, -12, 6, 0], [-18, 0, 12, -6], [-18, 0, 12, 6], [-18, 12, 6, 0], [-18, 18, -6, 6], [-18, 18, 12, 0],\\
&[-16, 0, 12, -4], [-16, 0, 12, 4], [-15, -9, 3, 3], [-15, -3, -3, 3], [-15, -3, 9, 3], [-15, 3, -3, -3], \\
&[-15, 3, 9, -3], [-15, 9, -15, 3], [-15, 9, 3, 9], [-14, -6, 6, 2], [-14, 0, 12, 2], [-14, 6, 6, -2],\\
&[-13, -3, -3, 5], [-12, -6, -12, 6], [-12, -6, 0, -6], [-12, -6, 0, 6], [-12, -6, 6, 0], [-12, 6, 0, -6],\\
&[-12, 6, 0, 6], [-12, 6, 6, 0], [-12, 12, 0, 0], [-12, 18, -6, 0], [-9, -9, 3, 9], [-9, -3, -9, 3], \\
&[-9, -3, -3, -3], [-9, -3, 9, -3], [-9, 3, -3, 3], [-9, 9, 3, -9], [-9, 15, -3, 3], [-8, -6, 0, -2], \\
&[-8, 6, 0, 2], [-6, -12, 6, 0], [-6, -6, 0, 0], [-6, -6, 6, -6], [-6, 0, -6, 12], [-6, 6, 0, 0],\\
&[-5, -3, -3, 1], [-5, 3, -3, -1], [-4, -6, 0, 2], [-4, 6, 0, -2], [-3, -3, -3, 3], [-3, 3, -3, 9].
\end{align*}
}

\end{document}